\def\1{{\bf 1}}
\def \<{\langle}
\def \>{\rangle}
\def \bee{\begin{equation}\label}
\def\bZ{{\mathbb Z}}
\def\bF{{\mathbb F}}
\def\bC{{\mathbb C}}
\def\b{\mathfrak{b}}
\def\1{{\bf 1}}
\def\b1{\mathbbold{1}}
\newtheorem{theorem}{Theorem}[section]
\newtheorem{lemma}[theorem]{Lemma}
\theoremstyle{definition}
\newtheorem{definition}[theorem]{Definition}
\newtheorem{example}[theorem]{Example}
\numberwithin{equation}{section}
\begin{document}
 \renewcommand{\abstractname}{Abstract}  
\begin{center}
{\Large {\bf Transposed Poisson Structures on Two Nambu $3$-Lie Algebras
}}

\vspace{0.3cm} Jingjing Jiang \footnote{Corresponding author.}\\

{\it College of Science, Civil Aviation University of
China, Tianjin 300300, China}\\
{\it Email: jingjingjiang0315@126.com}\\

\vspace{0.3cm}  Chunyi Li \\

{\it College of Science, Civil Aviation University of
China, Tianjin 300300, China}\\
{\it Email: chunyi0227@163.com}\\

\vspace{0.3cm}  Jie Lin \\

{\it Sino-European Institute of Aviation Engineering, Civil Aviation University of
China, Tianjin 300300, China}\\
{\it Email: linj022@126.com}\\
\end{center}

\begin{abstract}
  We describe the $\frac{1}{3}$-derivations and transposed Poisson structures of the Nambu 3-Lie algebras $A_\omega^\delta $ and $ A_{f,k} $. Specifically, we first present that $A_\omega^\delta $ is finitely generated and graded. Then we find that $A_\omega^\delta $  has non-trivial $\frac{1}{3}$-derivations and admits only trivial transposed Poisson structures. The 3-Lie algebra $A_{f,k}$ admits non-trivial transposed Poisson structures.\\
{\bf Keywords}: Transposed Poisson $3$-Lie algebra, Nambu $3$-Lie algebra, $\frac{1}{3}$-derivation, Lie algebra.\\
{\bf MSC}: 17A30, 17B40, 17B63, 17A42\\
\end{abstract}

\section{Introduction}
The concept of Poisson algebra, introduced by Sim\'eon Denis Poisson, is that of an associative algebra equipped with a Lie bracket which satisfies the Leibniz rule. This structure originates from the study of Hamiltonian mechanics and Poisson geometry. The Lie bracket, known as the Poisson bracket, exhibits skew-symmetry and satisfies the Jacobi identity. These properties make Poisson algebras particularly useful in the analysis of dynamical systems and geometric structures, such as Poisson manifolds, classical and quantum mechanics, and algebraic geometry. Formally, a vector space $L$ endowed with a bilinear operation $[\cdot,\cdot]$ and a commutative algebra structure with a linear operation $\cdot$ is called a Poisson algebra if it satisfies
$$[x,y\cdot z]=[x,y]\cdot z+y\cdot[x,z],\quad \forall x,y,z\in L.$$
Extensive research has been conducted on Poisson algebras and their related algebraic structures, including Novikov-Poisson algebra \cite{XU1997253}, quiver Poisson algebra \cite{YAO2007570}, F-manifold algebras \cite{fmanifold}, generic Poisson algebras \cite{Kaygorodov03042018,freiheitssatz}, etc.

Recently, Bai, Bai, Guo, and Wu introduced the dual notion of Poisson algebra, termed transposed Poisson algebra, by exchanging the roles of the two binary operations in the Leibniz rule that defines Poisson algebra \cite{BAI2023535}. This concept builds upon the earlier work of Filippov, who in 1998 introduced $\delta$-derivations of Lie algebras as a generalization of traditional derivations \cite{deriva}.  More recently, Ferreira, Kaygorodov, and Lopatkin established a connection between $\frac{1}{n}$-derivations of $n$-Lie algebras and transposed Poisson $n$-Lie algebras \cite{Ferreira2021}. This relationship is utilized to describe transposed Poisson structures on various classes of Lie algebras. For instance, transposed Poisson structures have been described on Galilean type Lie algebras and complex finite-dimensional solvable Lie algebras \cite{KAYGORODOV2023104781}, Witt and Witt-type algebras, Block Lie and Block Lie superalgebras \cite{KAYGORODOV2023167,Kaygorodov2023TransposedPS,kaygorodovwitttype}, Virasoro-type algebras \cite{KAYGORODOV2025105356}, and Lie incidence algebras \cite{KAYGORODOV2024458}. Additionally, Yuan and Hua introduced the notion of $\delta$-biderivations and explored transposed Poisson structures for several classes of Lie algebras \cite{YUANbibi}.

The notion of 3-Lie algebras was first introduced in 1985 by Filippov \cite{nlie}. Since then, this structure has been extensively studied, leading to significant advancements in understanding its properties and applications \cite{STJDF9537B9D96BE10DC17A9B14D3E7FC0D4, SHENG2018256}. For example, module extensions of 3-Lie algebra were studied in \cite{STJDBA793671D3236A0157543B89940F3210}, and various constructions of 3-Lie algebras were provided in \cite{STJDF9537B9D96BE10DC17A9B14D3E7FC0D4}.
The concept of Nambu 3-Lie algebras traces its origins to the work of Nambu, who proposed a generalization of Hamiltonian mechanics involving multiple Hamiltonians \cite{PhysRevD.7.2405}. In recent years, Nambu $3$-Lie algebras have found applications in diverse areas of theoretical physics \cite{PhysRevD.7.2405, takh160}. Nambu introduced the canonical Nambu bracket for triple of classical observables on a three dimensional phase space with coordinates $x, y, z$ as follows
\[
[f_1,f_2,f_3]_\partial=\frac{\partial(f_1,f_2,f_3)}{\partial(x,y,z)}.
\]
We call an abstract $3$-Lie algebra a canonical Nambu $3$-Lie algebra if it is isomorphic to a $3$-Lie algebra whose bracket is defined by the canonical Nambu bracket in the above equation. Bai, Li, and Wang construct two kinds of infinite-dimensional $3$-Lie algebras $A_{\omega}^{\delta}$ and $A_{f,k}$ from a given commutative associative algebra, and show that they are all canonical Nambu 3-Lie algebras in \cite{Bai2017}.

In this paper, we describe the $\frac{1}{3}$-derivations and transposed Poisson structures of the $3$-Lie algebras $A_\omega^\delta $ and $ A_{f,k} $. For $A_{\omega}^{\delta}$, we prove that it is a finitely generated and graded $3$-Lie algebra, and we have compute its $\frac{1}{3}$-derivations.
More precisely, we prove in Theorem \ref{AGRADE} that transposed Poisson structures on $A_\omega^\delta$ are trivial. We show that $A_{f,k}$ admits non-trivial transposed Poisson structure in Theorem \ref{th1}. We finish our article with a concrete example of transports Poisson structure on $A_{f,k}$.

\section{Preliminaries}
In this section, we recall some definitions and known results for studying transposed Poisson structure. All algebras and vector spaces mentioned in this paper are considered over complex field $\bC$.
\begin{definition}\cite{BAI2023535, poissonnlie}\label{poisson3}
Let $L$ be a vector space equipped with two nonzero bilinear operations $\cdot$ and $[\cdot,\cdot,\cdot]$. The triple $(L,\cdot,[\cdot,\cdot,\cdot])$ is called a Poisson $3$-Lie algebra if $(L,\cdot)$ is a commutative associative algebra and $(L,[\cdot,\cdot,\cdot])$ is a $3$-Lie algebra that satisfies the following compatibility condition
\begin{equation}
[x,y,uv]=u[x,y,v]+[x,y,u]v.
\end{equation}
\end{definition}
\begin{definition}\cite{BAI2023535}\label{TP3}
Let $L$ be a vector space equipped with two nonzero bilinear operations $\cdot$ and $[\cdot,\cdot,\cdot]$. The triple $(L,\cdot,[\cdot,\cdot,\cdot])$ is called a transposed Poisson $3$-Lie algebra if $(L,\cdot)$ is a commutative associative algebra and $(L,[\cdot,\cdot,\cdot])$ is a $3$-Lie algebra that satisfies the following compatibility condition
\begin{equation}
3u[x,y,z]=[xu,y,z]+[x,yu,z]+[x,y,zu].
\end{equation}
\end{definition}

\begin{definition}
Let $(L,[\cdot,\cdot,\cdot])$ be a 3-Lie algebra. A transposed Poisson structure on $(L,[\cdot,\cdot,\cdot])$ is a commutative associative multiplication $\cdot$ in $L$ which makes $(L,\cdot,[\cdot,\cdot,\cdot])$ a transposed Poisson 3-Lie algebra.
\end{definition}

\begin{definition}\cite{Ferreira2021}\label{13derivation}
Let $(L,[\cdot,\cdot,\cdot])$ be a 3-Lie algebra, $\phi:L\to L$ be a linear map. Then $\phi$ is a $\frac{1}{3}$-derivation if it satisfies
\begin{equation}\label{deri}
\phi([x,y,z])=\frac{1}{3}([\phi(x),y,z]+[x,\phi(y),z]+[x,y,\phi(z)]).
\end{equation}
\end{definition}

Definitions \ref{TP3} and \ref{13derivation} implies the following key lemma.
\begin{lemma}\label{jhjh}
Let $(L,\cdot,[\cdot,\cdot,\cdot])$ be a transposed Poisson 3-Lie algebra and $z$ an arbitrary element from $L$. Then the left multiplication
$R_z$ in the commutative associative algebra $(L,\cdot)$ gives a $\frac{1}{3}$-derivation on the 3-Lie algebra $(L,[\cdot,\cdot,\cdot])$.
\end{lemma}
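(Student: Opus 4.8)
The plan is to verify the defining identity \eqref{deri} for $\phi = R_z$ directly from the compatibility condition of Definition \ref{TP3}, so that the entire argument reduces to a single substitution. First I would record that $R_z$ is genuinely a linear endomorphism of $L$: since the associative product $\cdot$ is bilinear, the map $R_z : x \mapsto z \cdot x$ is linear in $x$ and is therefore a legitimate candidate for a $\frac{1}{3}$-derivation. Because $(L,\cdot)$ is commutative, left and right multiplication by $z$ coincide, so writing $R_z(x) = z\cdot x = x\cdot z$ is unambiguous; this commutativity is the one structural fact that will be used to line the two identities up.

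Next I would rewrite the transposed Poisson compatibility condition with the fixed element $z$ playing the role of the multiplier. To avoid the clash of notation with the bracket argument $z$ appearing in Definition \ref{TP3}, I would first relabel its variables: for all $a,b,c,u \in L$ we have $3u[a,b,c] = [au,b,c] + [a,bu,c] + [a,b,cu]$. Specializing $u = z$ (the fixed element of the lemma) and writing the three bracket slots as $(x,y,w)$ gives
\begin{equation*}
3z[x,y,w] = [xz, y, w] + [x, yz, w] + [x, y, wz].
\end{equation*}
Using commutativity of $\cdot$ to replace $xz,\,yz,\,wz$ by $zx,\,zy,\,zw$ and dividing by $3$, the left-hand side becomes $z\cdot[x,y,w] = R_z([x,y,w])$ and the right-hand side becomes $\frac{1}{3}\big([R_z(x),y,w] + [x,R_z(y),w] + [x,y,R_z(w)]\big)$, which is precisely \eqref{deri}. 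Hence $R_z$ is a $\frac{1}{3}$-derivation on $(L,[\cdot,\cdot,\cdot])$.

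There is essentially no obstacle to overcome: the statement is exactly the assertion that Definition \ref{TP3} and Definition \ref{13derivation} encode the same identity once one bracket slot is ``divided off'' by a fixed element. The only points requiring the slightest care are bookkeeping — keeping the fixed multiplier $z$ notationally distinct from the generic bracket arguments — and invoking commutativity of $\cdot$ so that the order of factors in the two defining identities matches. Both are immediate, so the proof is a direct verification with no estimation or case analysis involved.
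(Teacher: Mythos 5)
Your proof is correct and coincides with the paper's own reasoning: the paper states this lemma without proof, noting only that Definitions \ref{TP3} and \ref{13derivation} imply it, and your substitution of the fixed element $z$ into the compatibility condition (together with commutativity of $\cdot$ and division by $3$) is exactly that immediate verification.
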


The basic example of $\frac{1}{3}$-derivation is the multiplication by an element from the ground field. We call this $\frac{1}{3}$-derivation the trivial $\frac{1}{3}$-derivation.
\begin{theorem}\label{guanxi}
Let $L$ be a $3$-Lie algebra without non-trivial $\frac{1}{3}$-derivation. Then every transposed Poisson structure on $L$ is trivial.
\end{theorem}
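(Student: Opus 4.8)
The plan is to reduce everything to Lemma~\ref{jhjh} together with the hypothesis on $\frac13$-derivations. Let $\cdot$ be a commutative associative multiplication making $(L,\cdot,[\cdot,\cdot,\cdot])$ a transposed Poisson $3$-Lie algebra; I want to show $\cdot$ is the zero product. First I would fix an arbitrary $z\in L$ and consider the multiplication operator $R_z\colon L\to L$ defined by $R_z(x)=z\cdot x$. By Lemma~\ref{jhjh}, $R_z$ is a $\frac13$-derivation of the $3$-Lie algebra $(L,[\cdot,\cdot,\cdot])$.

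Since $L$ has no non-trivial $\frac13$-derivation, every $\frac13$-derivation is the trivial one, that is, a scalar multiple of the identity. Applying this to $R_z$, there exists $\lambda_z\in\bC$ with $R_z=\lambda_z\,\id$, i.e.\ $z\cdot x=\lambda_z\, x$ for all $x\in L$. The next step is to exploit the commutativity of $\cdot$ to pin down the scalars: for any $x,z\in L$ we have $\lambda_z x=z\cdot x=x\cdot z=\lambda_x z$, hence $\lambda_z x=\lambda_x z$. Assuming $\dim_{\bC}L\ge 2$ (which holds for both $A_\omega^\delta$ and $A_{f,k}$), for any nonzero $z$ I can pick an $x$ linearly independent from $z$, and linear independence forces $\lambda_z=\lambda_x=0$. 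Therefore $\lambda_z=0$ for every $z\in L$, so $z\cdot x=0$ identically and the multiplication is trivial.

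The argument is short and essentially formal once Lemma~\ref{jhjh} is in hand. The only step requiring genuine care is the final one: one must combine the scalar form $R_z=\lambda_z\,\id$ coming from the $\frac13$-derivation hypothesis with the commutativity of $\cdot$ to conclude that all the scalars vanish, and this is precisely where the non-degeneracy of $L$ (dimension at least two) is needed. This is also why, in the applications, establishing the absence of non-trivial $\frac13$-derivations for $A_\omega^\delta$ is the real work, whereas Theorem~\ref{guanxi} itself serves only as the bridge from that computation to the classification of transposed Poisson structures.
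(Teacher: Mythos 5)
Your proposal is correct and is essentially the argument the paper has in mind: the paper omits the proof, citing \cite{Ferreira2021}, and the standard proof there is exactly your reduction via Lemma~\ref{jhjh} to $R_z=\lambda_z\,\id$ followed by the commutativity trick $\lambda_z x=\lambda_x z$ forcing all scalars to vanish. Your side remark on needing $\dim_{\bC}L\ge 2$ is harmless here, since Definition~\ref{TP3} requires a nonzero $3$-Lie bracket, which already forces $\dim_{\bC}L\ge 3$.
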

The proof of this Theorem \ref{guanxi} is similar to \cite{Ferreira2021}, we omit it.

\begin{lemma}\textup{\cite{FARNSTEINER198833}}
Let $G$ be an abelian group, $L$ be a finitely generated Lie algebra, $V$ be a $G$-graded $L$-module. Then
\[
\mathrm{Der}_F(L,V)=\bigoplus_{g\in G}{\mathrm{Der}_F(L,V)}_g.
\]
\end{lemma}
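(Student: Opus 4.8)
The plan is to realize the decomposition $\mathrm{Der}_F(L,V)=\bigoplus_{g\in G}\mathrm{Der}_F(L,V)_g$ by splitting each derivation into its homogeneous pieces with respect to the $G$-grading, and then to invoke finite generation at exactly one point: to guarantee that only finitely many pieces survive, so that one obtains a genuine direct sum and not merely an inclusion into the direct product $\prod_{g\in G}\mathrm{Der}_F(L,V)_g$.

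First I would fix conventions, writing $L=\bigoplus_{a\in G}L_a$ and $V=\bigoplus_{b\in G}V_b$ with module action $L_a\cdot V_b\subseteq V_{a+b}$, and calling a linear map $f\colon L\to V$ homogeneous of degree $g$ when $f(L_a)\subseteq V_{a+g}$ for all $a$; by definition $\mathrm{Der}_F(L,V)_g$ consists of the homogeneous derivations of degree $g$. Given an arbitrary derivation $D$, for each $g\in G$ I would define $D_g\colon L\to V$ on a homogeneous element $x\in L_a$ by letting $D_g(x)=(D(x))_{a+g}$, the component of $D(x)$ in $V_{a+g}$, and extend linearly. By construction $D_g$ is homogeneous of degree $g$, and for each fixed homogeneous $x$ one has $D(x)=\sum_g D_g(x)$ with only finitely many nonzero terms, since $D(x)\in V$ has only finitely many nonzero graded components.

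Next I would verify that each $D_g$ is itself a derivation. For homogeneous $x\in L_a$ and $y\in L_{a'}$ the cocycle identity $D([x,y])=x\cdot D(y)-y\cdot D(x)$ is an equality in $V$; projecting both sides onto the homogeneous component of degree $a+a'+g$ and noting that $[x,y]\in L_{a+a'}$, $x\cdot D_g(y)\in V_{a+a'+g}$, and $y\cdot D_g(x)\in V_{a+a'+g}$, one reads off precisely $D_g([x,y])=x\cdot D_g(y)-y\cdot D_g(x)$, so $D_g\in\mathrm{Der}_F(L,V)_g$. Directness of the sum is then immediate: if $\sum_g D_g=0$ with each $D_g$ homogeneous of degree $g$, then evaluating at any homogeneous $x\in L_a$ and comparing the $V_{a+g}$-components forces $D_g(x)=0$ for every $g$.

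The hard part, and the only place where finite generation enters, is to show that $D=\sum_g D_g$ is a \emph{finite} sum, i.e. that $D_g=0$ for all but finitely many $g$. I would choose generators of $L$, which we may take to be homogeneous, say $x_1,\dots,x_n$ of degrees $a_1,\dots,a_n$ (replace any finite generating set by the finite set of homogeneous components of its members). Each $D(x_i)$ has only finitely many nonzero graded components, so $S=\{\,g\in G: (D(x_i))_{a_i+g}\neq 0 \text{ for some } i\,\}$ is finite. For $g\notin S$ we have $D_g(x_i)=0$ for every generator; since a derivation annihilating a generating set annihilates every iterated bracket of the generators by the Leibniz rule, and these brackets span $L$, we conclude $D_g=0$. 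Hence $D=\sum_{g\in S}D_g$ lies in $\bigoplus_{g\in G}\mathrm{Der}_F(L,V)_g$, which together with directness gives the claimed decomposition. I expect the derivation check to be routine grading bookkeeping, while the genuinely essential step is this finiteness reduction, which collapses without the finite generation hypothesis.
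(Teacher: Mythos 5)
Your proof is correct and takes essentially the same route as the paper's own argument: the paper cites this lemma from Farnsteiner without proof, but its proof of Theorem 2.7 (the $\frac{1}{3}$-derivation analogue) is exactly your scheme --- split the derivation into homogeneous components via the graded projections, verify the derivation identity componentwise by projecting the Leibniz/cocycle relation, and invoke finite generation solely to show that only finitely many components are nonzero, since a derivation vanishing on a generating set vanishes identically. The only cosmetic difference is that you pass to a finite \emph{homogeneous} generating set, whereas the paper keeps an arbitrary finite generating set and tracks the finite degree sets $Q$, $R$ and $T=\{g-h:\, h\in Q,\ g\in R\}$; both devices yield the same finiteness conclusion.
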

Next, we will extend this lemma to the case of $\frac{1}{3}$-derivations on finitely generated $G$-graded $3$-Lie algebras.

Let $G$ be an abelian group, $L=\bigoplus\limits_{g\in G}{L_g}$ a $G$-graded $3$-Lie algebra, we say that a $\frac{1}{3}$-derivation $\varphi$ has degree $g$ (and denoted by $deg(\varphi)$) if $\varphi(L_h)=L_{g+h}$. Denote by $\mathrm{Der}_{\frac{1}{3}}(L)$ the vector space of all $\frac{1}{3}$-derivations and write $\mathrm{Der}_{\frac{1}{3}}(L)_g=\{\varphi \in \mathrm{Der}_{\frac{1}{3}}(L) | deg(\varphi)=g\}$. We give the following theorem, it is crucial in our work.

\begin{theorem}\label{th:fc}
Let $G$ be an abelian group, $L=\bigoplus\limits_{g\in G}{L_g}$ be a finitely generated $G$-graded $3$-Lie algebra. Then
\[\mathrm{Der}_{\frac{1}{3}}(L)=\bigoplus_{g\in G}{\mathrm{Der}_{\frac{1}{3}}(L)_g}.\]
\end{theorem}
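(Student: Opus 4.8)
The plan is to adapt Farnsteiner's homogeneity argument to the $\frac{1}{3}$-derivation setting, using finite generation precisely to pass from a direct product of homogeneous pieces to a direct sum. The inclusion $\bigoplus_{g\in G}\mathrm{Der}_{\frac{1}{3}}(L,V)_g\subseteq\mathrm{Der}_{\frac{1}{3}}(L,V)$ is immediate, so the content is the reverse inclusion: every $\frac{1}{3}$-derivation is a finite sum of homogeneous ones. First I would fix, for each $k\in G$, the projection $\pi_k\colon V\to V_k$ afforded by the grading $V=\bigoplus_{k\in G}V_k$, and for $\varphi\in\mathrm{Der}_{\frac{1}{3}}(L,V)$ define its degree-$g$ component $\varphi_g$ by $\varphi_g(x)=\pi_{g+h}(\varphi(x))$ for homogeneous $x\in L_h$, extended linearly. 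By construction $\varphi_g(L_h)\subseteq V_{g+h}$, so $\varphi_g$ is homogeneous of degree $g$, and for each fixed $x$ only finitely many $\varphi_g(x)$ are nonzero, with $\varphi(x)=\sum_{g}\varphi_g(x)$.

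Next I would check that each $\varphi_g$ is itself a $\frac{1}{3}$-derivation. Take homogeneous $x\in L_a$, $y\in L_b$, $z\in L_c$, so that $[x,y,z]\in L_{a+b+c}$. Applying $\pi_{g+a+b+c}$ to both sides of the defining identity
\[
\varphi([x,y,z])=\frac{1}{3}\bigl([\varphi(x),y,z]+[x,\varphi(y),z]+[x,y,\varphi(z)]\bigr)
\]
and using the grading compatibility of the module action, $[V_p,L_b,L_c]\subseteq V_{p+b+c}$ together with its analogues, one sees that in each term on the right only the degree-$(g+a)$ part of $\varphi(x)$ (resp. the degree-$(g+b)$, degree-$(g+c)$ parts of $\varphi(y)$, $\varphi(z)$) survives, namely $\varphi_g(x)$, $\varphi_g(y)$, $\varphi_g(z)$. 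Hence
\[
\varphi_g([x,y,z])=\frac{1}{3}\bigl([\varphi_g(x),y,z]+[x,\varphi_g(y),z]+[x,y,\varphi_g(z)]\bigr),
\]
and since homogeneous elements span $L$ this extends by linearity, showing $\varphi_g\in\mathrm{Der}_{\frac{1}{3}}(L,V)_g$.

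The main point, and the only place the finite-generation hypothesis enters, is to show that $\varphi_g=0$ for all but finitely many $g$. Since $L$ is graded and finitely generated, I would choose a finite generating set and replace each generator by its (finitely many) homogeneous components, obtaining a finite set of homogeneous generators $x_1,\dots,x_n$ with $x_i\in L_{d_i}$. Each $\varphi(x_i)=\sum_k\pi_k(\varphi(x_i))$ is a finite sum, so the set $S=\{\,k-d_i:\pi_k(\varphi(x_i))\neq0,\ 1\le i\le n\,\}$ is finite and $\varphi_g(x_i)=0$ for every $i$ whenever $g\notin S$. I would then observe that a $\frac{1}{3}$-derivation vanishing on a generating set vanishes identically: if $\varphi_g$ kills $u,v,w$ then the defining identity forces $\varphi_g([u,v,w])=0$, so by induction on bracket length $\varphi_g$ annihilates every iterated bracket of the $x_i$, and these span $L$. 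Thus $\varphi_g=0$ for $g\notin S$, giving $\varphi=\sum_{g\in S}\varphi_g$ as a finite sum of homogeneous $\frac{1}{3}$-derivations. Finally, the sum is direct because a relation $\sum_g\psi_g=0$ with $\psi_g$ of degree $g$, evaluated on homogeneous $x\in L_h$ and projected to $V_{g+h}$, forces each $\psi_g(x)=0$. I expect the only genuinely delicate step to be the bookkeeping in the finite-generation argument, in particular ensuring that vanishing on generators really propagates through arbitrary iterated brackets, while the componentwise verification is a routine degree count.
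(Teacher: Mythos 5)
Your proposal is correct and follows essentially the same route as the paper's proof: the same homogeneous components $\varphi_g=\sum_{h}\pi_{g+h}\circ\varphi\circ p_h$, the same degree count to verify each $\varphi_g$ satisfies the $\frac{1}{3}$-derivation identity, and the same use of finite generation to show only finitely many components are nonzero. If anything, you make explicit a step the paper leaves implicit, namely that a $\frac{1}{3}$-derivation vanishing on a generating set vanishes identically (the kernel is closed under brackets), which is exactly what justifies the paper's final ``as $S$ generates $L$'' conclusion.
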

\begin{proof}
For any $g \in G$, we let $\pi_g:L \to L_g$ denote the canonical projection. Since $L$ is finitely generated, there is a finite subset $S \subset L$ generating $L$. Let $\varphi:L\to V$ be a $\frac{1}{3}$-derivation. Then there are finite sets $Q,R\subset G$ such that
\begin{equation}
S\subset \sum_{g \in Q}{L_g} \quad and \quad  \varphi(S) \subset \sum_{g \in R}{L_g}.\label{eq:jh}
\end{equation}

For $g \in G$, put $\varphi_g:=\sum\limits_{h \in G}{\pi_{g+h} \mathbin{\circ} \varphi \mathbin{\circ} \pi_h}$. Since for $x_r \in L_r$, $x_s \in L_s$ and $x_t \in L_t$, we have
\begin{align*}
\varphi_g([x_r,x_s,x_t])=&\sum_{h \in G}{\pi_{g+h} \mathbin{\circ} \varphi \mathbin{\circ} \pi_h}([x_r,x_s,x_t]) \\
=&\pi_{g+r+s+t} \mathbin{\circ} \varphi ([x_r,x_s,x_t])\\
=&\frac{1}{3} \pi_{g+r+s+t}([\varphi(x_r),x_s,x_t])+\frac{1}{3} \pi_{g+r+s+t}([x_r,\varphi(x_s),x_t])+\frac{1}{3} \pi_{g+r+s+t}([x_r,x_s,\varphi(x_t)])\\
=&\frac{1}{3}[\pi_{g+r}(\varphi(x_r)),x_s,x_t]+\frac{1}{3}[x_r,\pi_{g+s}(\varphi(x_s)),x_t]+\frac{1}{3}[x_r,x_s,\pi_{g+t}(\varphi(x_t))]\\
=&\frac{1}{3}[\varphi_g(x_r),x_s,x_t]+\frac{1}{3}[x_r,\varphi_g(x_s),x_t]+\frac{1}{3}[x_r,x_s,\varphi_g(x_t)],
\end{align*}
it follows that $\varphi_g$ is contained in $\mathrm{Der}_{\frac{1}{3}}(L)_g$.\\

Let $T:=\{g-h;h \in Q,g \in R\}$. Then $T$ is finite and we obtain, observing Eq.\eqref{eq:jh}, for $y \in S$
\begin{align*}
\varphi(y)=& \sum_{g\in R}{\pi_g\mathbin{\circ}\varphi(y)}\\
=&\sum_{g\in R}{\sum_{h\in Q}{\pi_g\mathbin{\circ}\varphi\mathbin{\circ}\pi_h(y)}}\\
=&\sum_{h\in Q}{(\sum_{g\in R}{\pi_{(g-h)+h}\mathbin{\circ}\varphi\mathbin{\circ}\pi_h(y)})}\\
=&\sum_{h\in Q}{(\sum_{q\in T}{\pi_{q+h}\mathbin{\circ}\varphi\mathbin{\circ}\pi_h(y)})}\\
=&\sum_{q\in T}{\sum_{h\in Q}{\pi_{q+h}\mathbin{\circ}\varphi\mathbin{\circ}\pi_h(y)}}\\
=&\sum_{q\in T}{\sum_{h\in G}{\pi_{q+h}\mathbin{\circ}\varphi\mathbin{\circ}\pi_h(y)}}\\
=&\sum_{q\in T}{\varphi_q(y)}.
\end{align*}
This shows that the $\frac{1}{3}$-derivation $\varphi$ and $\sum\limits_{q\in T}{\varphi_q(y)}$ coincide on $S$. As $S$ generates $L$, we obtain $\varphi=\sum\limits_{q\in T}{\varphi_q}$.
\end{proof}

\section{Transposed Poisson structures on 3-Lie algebra $A_{\omega}^{\delta}$}

As given in \cite{Bai2017}, let $A$ be a commutative associative algebra with a basis $\{L_r,M_r\mid r \in \bZ\}$ in the multiplication
\begin{equation}
L_rL_s=L_{r+s},\quad M_rM_s=M_{r+s},\quad L_rM_s=0, \quad \forall r,s\in \bZ, \label{eq:cf}
\end{equation}
$\delta$ be a derivation of $A$ defined by
\[\delta(L_r)=rL_r,\quad\delta(M_r)=rM_r,\]
and let $\omega$ be an involution of $A$ defined by
\[\omega(L_r)=M_{-r},\quad \omega(M_r)=L_{-r}.\]
In \cite{Bai2017}, the authors utilized Theorem 3.3 from \cite{STJDF9537B9D96BE10DC17A9B14D3E7FC0D4} to obtain the following 3-Lie algebra
\begin{equation*}
\begin{split}
[L_r,L_s,M_t]_{\omega}&=(s-r)L_{r+s-t},\\
[L_r,M_s,M_t]_{\omega}&=(t-s)M_{s+t-r},\\
[L_r,L_s,L_t]_{\omega}&=[M_r,M_s,M_t]_{\omega}=0
\end{split}
\end{equation*}
for any $r,s,t \in \bZ$, and proved that this 3-Lie algebra is a simple canonical Nambu 3-Lie algebra.

Let $M_r=M_{-r}$ for any $r \in \bZ$, we have the following definition of $3$-Lie algebra $A_{\omega}^{\delta}$.
\begin{definition}
$A_{\omega}^{\delta}$ is a $3$-Lie algebra with a basis $\{L_r,M_r\mid r \in \bZ\}$, subject to the relation
\begin{equation}\label{2eq1}
\begin{split}
[L_r,L_s,M_t]&=(s-r)L_{r+s+t},\\
[L_r,M_s,M_t]&=(s-t)M_{r+s+t},\\
[L_r,L_s,L_t]&=[M_r,M_s,M_t]=0
\end{split}
\end{equation}
for $r,s,t\in \bZ$.
\end{definition}
Now we will prove that $A_{\omega}^{\delta}$ is finitely generated and graded, i.e., $A_{\omega}^{\delta}=\bigoplus\limits_{k\in \bZ}{(A_{\omega}^{\delta})_k}$ , where $(A_{\omega}^{\delta})_k=\bC L_k\bigoplus \bC M_k$.
\begin{lemma}
The 3-Lie algebra $A_{\omega}^{\delta}$ is finitely generated and $\bZ$-graded.
\end{lemma}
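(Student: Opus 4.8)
The plan is to verify the two assertions separately: first that the decomposition $A_{\omega}^{\delta}=\bigoplus_{k\in\bZ}(A_{\omega}^{\delta})_k$ with $(A_{\omega}^{\delta})_k=\bC L_k\oplus\bC M_k$ is a $\bZ$-grading, and then that a suitable finite subset generates the whole algebra. The vector-space direct sum decomposition is immediate from the basis $\{L_r,M_r\mid r\in\bZ\}$, so for the grading it remains only to check that the ternary bracket respects degrees, that is, $[(A_{\omega}^{\delta})_a,(A_{\omega}^{\delta})_b,(A_{\omega}^{\delta})_c]\subseteq(A_{\omega}^{\delta})_{a+b+c}$ for all $a,b,c\in\bZ$. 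By trilinearity it suffices to test this on basis elements, and by the defining relations \eqref{2eq1} every nonzero bracket of three basis elements of degrees $a,b,c$ is a scalar multiple of $L_{a+b+c}$ or $M_{a+b+c}$, whose degree is exactly $a+b+c$; the brackets $[L_r,L_s,L_t]$ and $[M_r,M_s,M_t]$ vanish and cause no difficulty. This establishes the $\bZ$-grading.

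For finite generation I would take the finite set $S=\{L_{-1},L_0,L_1,M_{-1},M_0,M_1\}$ and let $\langle S\rangle$ denote the $3$-Lie subalgebra it generates. The goal is to show $L_r,M_r\in\langle S\rangle$ for every $r\in\bZ$, which I would prove by a two-directional induction on $|r|$. For the upward direction, assuming $L_0,\dots,L_n$ and $M_0,\dots,M_n\in\langle S\rangle$ for some $n\ge1$, the identities
\[
[L_0,L_1,M_n]=L_{n+1},\qquad [L_n,M_0,M_1]=-M_{n+1}
\]
produce $L_{n+1}$ and $M_{n+1}$. For the downward direction, assuming $L_0,\dots,L_{-n}$ and $M_0,\dots,M_{-n}\in\langle S\rangle$, the identities
\[
[L_{-1},L_0,M_{-n}]=L_{-n-1},\qquad [L_{-n},M_{-1},M_0]=-M_{-n-1}
\]
produce $L_{-n-1}$ and $M_{-n-1}$. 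Since the base cases $L_{-1},L_0,L_1,M_{-1},M_0,M_1$ are precisely the generators, both inductions close and $\langle S\rangle=A_{\omega}^{\delta}$.

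The routine part is the arithmetic in these four bracket identities, which follows by substituting into \eqref{2eq1} and noting that the relevant coefficients $s-r$ and $s-t$ are nonzero. The only genuinely delicate point is the choice of generating set: one must include elements of both types $L$ and $M$ (since any bracket of three $L$'s or three $M$'s vanishes), at least two distinct indices within each type (so that the coefficients $s-r$ and $s-t$ do not force the relevant brackets to zero), and indices of both signs, because the degree of any bracket is the sum of the degrees of its three entries and hence a generating set of nonnegative degrees could never reach the negative part of the algebra. Once a set meeting these three constraints is fixed, the ladder argument above carries the induction through, and no further obstruction arises.
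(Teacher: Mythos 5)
Your proof is correct and takes essentially the same route as the paper: the identical generating set $\{L_{-1},L_0,L_1,M_{-1},M_0,M_1\}$ and the same four bracket identities (yours differ only by a sign and an index shift in the $M$-cases), organized as the same upward/downward induction. Your only addition is spelling out the degree check $[(A_{\omega}^{\delta})_a,(A_{\omega}^{\delta})_b,(A_{\omega}^{\delta})_c]\subseteq(A_{\omega}^{\delta})_{a+b+c}$, which the paper treats as immediate from Eq.~\eqref{2eq1}.
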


\begin{proof}
We will prove that $A_{\omega}^{\delta}$ is generated by $L_{-1},L_0,L_1,M_{-1},M_0$ and $M_1$. In fact, for any $n\in {\bZ}^+$, we have
\begin{align*}
&L_n=(1-0)L_{0+1+(n-1)}=[L_0,L_1,M_{n-1}],\\
&M_n=(1-0)M_{(n-1)+1+0}=[L_{n-1},M_1,M_0],\\
&L_{-n}=(0-(-1))L_{(-1)+0+(-n+1)}=[L_{-1},L_0,M_{-n+1}],\\
&M_{-n}=(0-(-1))M_{(-n+1)+0+(-1)}=[L_{-n+1},M_0,M_{-1}],
\end{align*}
then by induction on $n$, we have $A_{\omega}^{\delta}$ is finitely generated by $L_{-1},L_0,L_1,M_{-1},M_0$ and $M_1$.

By Eq.\eqref{2eq1}, it is obvious that $A_{\omega}^{\delta}$ is $\bZ$-graded, i.e., $A_{\omega}^{\delta}=(A_{\omega}^{\delta})_k$, where $(A_{\omega}^{\delta})_k=\bC L_k \bigoplus \bC M_k$ for any $k\in \bZ$.
\end{proof}
\begin{theorem}\label{3.5}
The space of all $\frac{1}{3}$-derivations of $A_{\omega}^{\delta}$ is
$$\mathrm{Der}_\frac{1}{3}(A_{\omega}^{\delta})=\bigoplus_{k\in \bZ}{\bC D_k},$$
where $D_k$ is a $\frac{1}{3}$-derivation with $\deg(D_k)=k$ such that for any $r\in \bZ$,
\begin{equation}\label{2eq2}
\begin{cases}
&D_k(L_r)=L_{r+k},\\
&D_k(M_r)=M_{r+k}.
\end{cases}
\end{equation}
\end{theorem}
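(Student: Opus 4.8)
The plan is to reduce to homogeneous components and then solve a linear system on the structure scalars. Since $A_\omega^\delta$ is finitely generated and $\bZ$-graded (by the preceding lemma), Theorem \ref{th:fc} gives $\mathrm{Der}_{\frac{1}{3}}(A_\omega^\delta)=\bigoplus_{k\in\bZ}\mathrm{Der}_{\frac{1}{3}}(A_\omega^\delta)_k$, so it suffices to determine each homogeneous piece $\mathrm{Der}_{\frac{1}{3}}(A_\omega^\delta)_k$. A $\frac{1}{3}$-derivation $\varphi$ of degree $k$ must send $(A_\omega^\delta)_r=\bC L_r\oplus\bC M_r$ into $(A_\omega^\delta)_{r+k}$, so it is completely encoded by scalars $a_r,b_r,c_r,d_r\in\bC$ via $\varphi(L_r)=a_rL_{r+k}+b_rM_{r+k}$ and $\varphi(M_r)=c_rL_{r+k}+d_rM_{r+k}$. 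First I would verify directly from \eqref{2eq2} and the brackets \eqref{2eq1} that each $D_k$ is a $\frac{1}{3}$-derivation of degree $k$, which gives the inclusion $\bC D_k\subseteq\mathrm{Der}_{\frac{1}{3}}(A_\omega^\delta)_k$; this reduces the theorem to the reverse inclusion.

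The core of the argument is to show that every such $\varphi$ is a scalar multiple of $D_k$. Substituting the ansatz into the defining identity \eqref{deri} for each relation in \eqref{2eq1} and comparing the $L$- and $M$-coefficients on the two sides yields, for all $r,s,t\in\bZ$, a system of six scalar equations: two from $[L_r,L_s,M_t]$, two from $[L_r,M_s,M_t]$, and one each from the vanishing brackets $[L_r,L_s,L_t]=0$ and $[M_r,M_s,M_t]=0$. The last two decouple the off-diagonal coefficients: each forces a cyclic relation, $b_r(t-s)+b_s(r-t)+b_t(s-r)=0$ and its analogue for $c$. Setting $t=0$ reduces this to $r b_s-s b_r=b_0(r-s)$, so $b_r$ must be affine, $b_r=\alpha r+\beta$ (and likewise for $c$). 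Feeding this back into the equation arising from $[L_r,L_s,M_t]$ and reading off the coefficient of the spectator index $t$ forces $\alpha=\beta=0$; hence $b_r=0$, and symmetrically $c_r=0$.

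With the off-diagonal scalars gone, the remaining two equations couple $a$ and $d$. The key manoeuvre is to exploit the free index: in the $L$-equation the right-hand side depends on $t$ only through $d_t$, so isolating that term and differencing in $t$ shows that $a_{m+t}-a_m$ is independent of $m$, forcing $a$ to be affine, $a_r=pr+q$, with $d_t=3pt+d_0$. Performing the same step on the free index in the $M$-equation produces a second relation between the slopes that collapses to $9pr=pr$, whence $p=0$ and both $a,d$ are constant; a final comparison of the constant terms in either equation gives $a_r=d_r=q$. Thus $\varphi=qD_k$ and $\mathrm{Der}_{\frac{1}{3}}(A_\omega^\delta)_k=\bC D_k$, which assembles to the claimed direct sum. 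I expect the main obstacle to be the bookkeeping of the coupled $a$--$d$ system: choosing the right substitutions (freezing two indices and varying the third) is what turns the functional equations into the rigid conclusion $p=0$, and tracking the signs coming from the antisymmetry of the $3$-bracket correctly throughout is the delicate part.
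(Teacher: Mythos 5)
Your proposal is correct and takes essentially the same route as the paper: reduce via Theorem \ref{th:fc} to homogeneous components, encode a degree-$k$ $\frac{1}{3}$-derivation by the scalars $a_r,b_r,c_r,d_r$, and compare coefficients in each bracket relation of \eqref{2eq1} to force $b_r=c_r=0$ and $a_r=d_r$ constant. Your affine/differencing treatment of the resulting linear system is only cosmetically different from the paper's specific substitutions (e.g.\ $s=1$, $r=-1$ in \eqref{2eq3} and $s=1$, $t=-1$ in \eqref{2eq6}), and your explicit verification that $D_k$ is indeed a $\frac{1}{3}$-derivation is a harmless (in fact welcome) addition that the paper leaves implicit.
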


\begin{proof}
By Theorem \ref{th:fc}, we have $\mathrm{Der}_{\frac{1}{3}}({A_{\omega}^{\delta}})=\bigoplus\limits_{k\in\bZ}{(\mathrm{Der}_{\frac{1}{3}}(A_{\omega}^{\delta}))_k}$. To determine $\mathrm{Der}_{\frac{1}{3}}({A_{\omega}^{\delta}})$, we need to calculate every homogeneous subspace $(\mathrm{Der}_{\frac{1}{3}}(A_{\omega}^{\delta}))_k$.
Let $\varphi_k \in (\mathrm{Der}_{\frac{1}{3}}(A_{\omega}^{\delta}))_k$, where $k$ is a fixed integer. We can assume
\[\varphi_k(L_r)=a_{k,r}L_{r+k}+b_{k,r}M_{r+k},\]
\[\varphi_k(M_r)=c_{k,r}L_{r+k}+d_{k,r}M_{r+k}.\]
For simplicity, we omit the subscript $k$ in coefficients.\\

Applying $\varphi_k$ to equation $[L_r,L_s,M_t]=(s-r)L_{r+s+t}$, we obtain
\begin{align*}
&\varphi_k([L_r,L_s,M_t])=(s-r)\varphi_k(L_{r+s+t})=(s-r)a_{r+s+t}L_{r+s+t+k}+(s-r)b_{r+s+t}M_{r+s+t+k}\\
=&\frac{1}{3}([\varphi_k(L_r),L_s,M_t]+[L_r,\varphi_k(L_s),M_t]+[L_r,L_s,\varphi_k(M_t)])\\
    =&\frac{1}{3}(a_r[L_{r+k},L_s,M_t]+b_r[M_{r+k},L_s,M_t]+a_s[L_r,L_{s+k},M_t]+b_s[L_r,M_{s+k},M_t]+c_t[L_r,L_s,L_{t+k}]+d_t[L_r,L_s,M_{t+k}])\\
    =&\frac{1}{3}((s-r-k)a_r+(s+k-r)a_s+(s-r)d_tL_{r+s+t+k})+\frac{1}{3}(-(r+k-t)b_r+(s+k-t)b_s)M_{r+s+t+k},
\end{align*}
comparing coefficients of the basis elements we obtain
\begin{align}
&3(s-r)a_{r+s+t} = (s-r-k)a_r+(s+k-r)a_s+(s-r)d_t, \label{2eq3} \\
&3(s-r)b_{r+s+t} = (s+k-t)b_s-(r+k-t)b_r. \label{2eq4}
\end{align}

Applying $\varphi_k$ to equation $[L_r,M_s,M_t]=(s-t)M_{r+s+t}$, we obtain
\begin{align*}
&\varphi_k([L_r,M_s,M_t])=(s-t)\varphi_k(M_{r+s+t})=(s-t)c_{r+s+t}L_{r+s+t+k}+(s-t)d_{r+s+t}M_{r+s+t+k}\\
=&\frac{1}{3}([\varphi_k(L_r),M_s,M_t]+[L_r,\varphi_k(M_s),M_t]+[L_r,M_s,\varphi_k(M_t)])\\
    =&\frac{1}{3}(a_r[L_{r+k},M_s,M_t]+b_r[M_{r+k},M_s,M_t]+c_s[L_r,L_{s+k},M_t]+d_s[L_r,M_{s+k},M_t]+c_t[L_r,M_s,L_{t+k}]+d_t[L_r,M_s,M_{t+k}])\\
    =&\frac{1}{3}((s+k-r)c_s-(t+k-r)c_t)L_{r+s+t+k}+\frac{1}{3}((s-t)a_r+(s+k-t)d_s+(s-t-k)d_t)M_{r+s+t+k},
\end{align*}
comparing coefficients of the basis elements we obtain that
\begin{align}
&3(s-t)c_{r+s+t} = (s+k-r)c_s-(t+k-r)c_t, \label{2eq5} \\
&3(s-t)d_{r+s+t} = (s-t)a_r+(s+k-t)d_s+(s-t-k)d_t. \label{2eq6}
\end{align}
In Eq.\eqref{2eq3}, let $s=1$, $r=-1$, we have
\begin{align}
&6a_t-2d_t = (2-k)a_{-1}+(2+k)a_1.\label{2eq9}
\end{align}
In Eq.\eqref{2eq6}, let $s=1$, $t=-1$, we have
\begin{align}
&6d_r-2a_r = (2+k)d_1+(2-k)d_{-1}.\label{2eq10}
\end{align}
By Eq.\eqref{2eq9} and \eqref{2eq10}, we have
\begin{equation}\label{adad}
a_t=\frac{(2-k)a_{-1}+(2+k)a_1+3((2+k)d_1+(2-k)d_{-1})}{16}
\end{equation}
and
$$ d_t=\frac{3((2-k)a_{-1}+(2+k)a_1)+(2+k)d_1+(2-k)d_{-1}}{16},$$
thus both $a_t$ and $d_t$ are numbers that are independent of $t$, we denote them by $a$ and $d$ respectively. By Eq.\eqref{adad}, we have $a=\frac{4a+12d}{16}$, that is $a=d$.

Applying $\varphi_k$ to equation $[L_r,L_s,L_t]=0$, we obtain
\begin{align*}
&\varphi_k([L_r,L_s,L_t])=0\\
=&\frac{1}{3}([\varphi_k(L_r),L_s,L_t]+[L_r,\varphi_k(L_s),L_t]+[L_r,L_s,\varphi_k(L_t)])\\
    =&\frac{1}{3}(b_r[M_{r+k},L_s,L_t]+b_s[L_r,M_{s+k},L_t]+b_t[L_r,L_s,M_{t+k}])\\
    =&\frac{1}{3}(-(s-t)b_r-(t-r)b_s+(s-r)b_t)L_{r+s+t+k}.
\end{align*}
It follows that
\begin{equation}\label{2eq7}
-(s-t)b_r-(t-r)b_s+(s-r)b_t = 0,
\end{equation}
let $r=t+2$ and $s=t+1$ in Eq.\eqref{2eq7}, we have $$b_{t+2}-b_{t+1}=b_{t+1}-b_t.$$ Obviously, $b_t$ is an arithmetic sequence,
implying that $b_t=b_1+(t-1)(b_2-b_1)$. By substituting it into Eq.\eqref{2eq4}, we obtain
\[3(s-r)b_1+3(s-r)(r+s+t-1)(b_2-b_1)=(s-r)b_1+(s-r)(s+r+k-t-1)(b_2-b_1),\]
let $r=1$, $s=2$, we have
$2b_1+4(t+1)(b_2-b_1)=k(b_2-b_1)$ holds for any $t\in\bZ$. Therefore, $b_1=b_2=0$, and consequently, $b_r=0$ for any $r\in\bZ$.

Applying $\varphi_k$ to equation $[M_r,M_s,M_t]=0$, we obtain
\begin{align*}
&\varphi_k([M_r,M_s,M_t])=0\\
=&\frac{1}{3}([\varphi_k(M_r),M_s,M_t]+[M_r,\varphi_k(M_s),M_t]+[M_r,M_s,\varphi_k(M_t)])\\
    =&\frac{1}{3}(c_r[L_{r+k},M_s,M_t]-c_s[L_{s+k},M_r,M_t]-c_t[L_{t+k},M_s,M_r])\\
    =&\frac{1}{3}((s-t)c_r-(r-t)c_s-(s-r)c_t)M_{r+s+t+k}.
\end{align*}
Therefore, we have
\begin{align}
&(s-t)c_r-(r-t)c_s-(s-r)c_t = 0. \label{2eq8}
\end{align}

Observing Eq.\eqref{2eq7} and \eqref{2eq8}, it can be found that the conditions satisfied by $c_t$ and $b_t$ are the same. Similarly, $c_r=0$ for all $r\in\bZ$.
Thus, $\varphi_k$ has the form
$$\begin{cases}
&\varphi_k(L_r)=\alpha L_{r+k},\\
&\varphi_k(M_r)=\alpha M_{r+k},
\end{cases}$$
where $\alpha \in \bC$.

Let $D_k$ be a $\frac{1}{3}$-derivation defined by
$$\begin{cases}
&D_k(L_r)=L_{r+k},\\
&D_k(M_r)=M_{r+k},
\end{cases}$$
then $\mathrm{Der}_\frac{1}{3}(A_{\omega}^{\delta})=\bigoplus\limits_{k\in \bZ}{\bC D_k}$.

\end{proof}
\begin{theorem}\label{AGRADE}
The $3$-Lie algebra $A_{\omega}^{\delta}$ does not admits non-trivial transposed Poisson structures.
\end{theorem}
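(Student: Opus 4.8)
The plan is to combine the bridge between transposed Poisson structures and $\frac{1}{3}$-derivations furnished by Lemma \ref{jhjh} with the complete classification of $\mathrm{Der}_{\frac{1}{3}}(A_\omega^\delta)$ obtained in Theorem \ref{3.5}. The first thing to notice is that Theorem \ref{guanxi} is \emph{not} directly applicable here: the algebra $A_\omega^\delta$ does carry non-trivial $\frac{1}{3}$-derivations, namely the maps $D_k$ with $k\neq 0$. So I cannot simply quote the absence of non-trivial $\frac{1}{3}$-derivations; the triviality of the multiplication has to be squeezed out by a finer argument that actually uses the special form of the $D_k$.

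The structural fact that drives everything is that each $D_k$ sends $L_r\mapsto L_{r+k}$ and $M_r\mapsto M_{r+k}$, so $D_k$ preserves the two subspaces $\mathcal L:=\bigoplus_{r\in\bZ}\bC L_r$ and $\mathcal M:=\bigoplus_{r\in\bZ}\bC M_r$. Since by Theorem \ref{3.5} every $\frac{1}{3}$-derivation is a finite $\bC$-linear combination of the $D_k$, I conclude that \emph{every} $\frac{1}{3}$-derivation of $A_\omega^\delta$ maps $\mathcal L$ into $\mathcal L$ and $\mathcal M$ into $\mathcal M$. Now let $\cdot$ be any transposed Poisson structure; by Lemma \ref{jhjh} each operator $R_z\colon x\mapsto z\cdot x$ is a $\frac{1}{3}$-derivation, hence preserves $\mathcal L$ and $\mathcal M$.

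The decisive step is to play this against commutativity on the mixed products. On one hand $L_r\cdot M_s=R_{L_r}(M_s)\in\mathcal M$; on the other hand, by commutativity, $L_r\cdot M_s=M_s\cdot L_r=R_{M_s}(L_r)\in\mathcal L$. As $\mathcal L\cap\mathcal M=0$, this forces $L_r\cdot M_s=0$ for all $r,s$. I then bootstrap: writing $R_{L_r}=\sum_k\lambda_k D_k$ (a finite sum) and using $R_{L_r}(M_s)=L_r\cdot M_s=0$ with $D_k(M_s)=M_{s+k}$ yields $\sum_k\lambda_k M_{s+k}=0$ for every $s$, so linear independence of the $M$'s gives every $\lambda_k=0$, whence $R_{L_r}=0$ and $L_r\cdot x=0$ for all $x$. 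The mirror computation with $R_{M_s}$ and $D_k(L_r)=L_{r+k}$ gives $M_s\cdot x=0$. Since the $L_r,M_s$ span $A_\omega^\delta$, the product $\cdot$ vanishes identically, i.e. the transposed Poisson structure is trivial. The only genuinely delicate point is the one flagged at the outset: seeing that Theorem \ref{guanxi} does not apply and that triviality must instead be extracted from the type-preserving nature of the $D_k$ together with the commutativity of $\cdot$; once that is recognized, the remaining steps are routine coordinate checks.
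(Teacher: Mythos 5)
Your proposal is correct and follows essentially the same route as the paper: both arguments use Lemma \ref{jhjh} to realize each multiplication operator as a $\frac{1}{3}$-derivation, expand it in the basis $\{D_k\}$ from Theorem \ref{3.5}, and then exploit commutativity of the mixed product $L_i\cdot M_j=M_j\cdot L_i$ --- one side lying in $\bigoplus_r\bC M_r$, the other in $\bigoplus_r\bC L_r$ --- to force all coefficients $\alpha_{i,k},\beta_{j,k}$ to vanish, so the product is identically zero. Your two-step presentation (first the subspace-preservation argument killing the mixed products, then the bootstrap killing $R_{L_r}$ and $R_{M_s}$) is merely a slight repackaging of the paper's single coefficient comparison, and your opening observation that Theorem \ref{guanxi} is inapplicable is a fair, accurate remark rather than a deviation.
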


\begin{proof}
Let $(A_{\omega}^{\delta},\cdot,[\cdot,\cdot,\cdot])$ be a transposed Poisson structure defined on the $3$-Lie algebra $A_{\omega}^{\delta}$. Then by Lemma \ref{jhjh}, for any element of $x \in A_{\omega}^{\delta}$, we have the operator of multiplication $\varphi_x(y)=x\cdot y$ is a $\frac{1}{3}$-derivation. Next, we omit the multiplication symbol $\cdot$ for simplicity. By Lemma \ref{3.5}, we set
$$\varphi_{L_i}=\sum_{k\in\bZ}{\alpha_{i,k}D_k},$$
$$\varphi_{M_i}=\sum_{k\in\bZ}{\beta_{i,k}D_k}.$$
We have
\begin{equation*}
L_iM_j=\varphi_{L_i}(M_j)=\sum_{k\in\bZ}{\alpha_{i,k}M_{k+j}},
\end{equation*}
on the other hand,
\begin{equation*}
M_jL_i=\varphi_{M_j}(L_i)=\sum_{k\in\bZ}{\beta_{i,k}L_{k+i}}.
\end{equation*}
Due to the commutative property, we have $L_iM_j=M_jL_i$, so $\alpha_{i,k}=0$, $\beta_{i,k}=0$ for any $i,k\in \bZ.$
Therefore, $A_{\omega}^{\delta}$ admits only trivial transpose Poisson structures.
\end{proof}

\section{Transposed Poisson structures on 3-Lie algebra $A_{f,k}$}
Similarly, using the commutative associative algebra $A$ given in Eq.(\ref{eq:cf}), for each $k \in \bZ$, let $d_k$ be a derivation of $A$ defined by
\[d_k(aL_r+bM_t)=arL_{k+r}, \quad \forall a,b \in \bC,\quad r,t\in \bZ.\]
In \cite{Bai2017}, the authors proved that through $d_k$, each integer $k$ induces a Lie algebra structure $[\cdot,\cdot]_k$ on $A$ defined by
\[[u,v]_k=d_k(u)v-vd_k(v), \quad \forall u,v \in A.\]
By Theorem 3.3 in \cite{Bai2010RealizationsO3}, the authors obtain the following $3$-Lie algebra.
\begin{definition}\cite{Bai2017}
Let $f:A\to\bF$ be a linear function with $f(L_r)=0$ for all $r\in\bZ$, and there exists some $t$ such that $f(M_t)\neq 0$.
Then for any integer $k$, $(A_{f,k},[\cdot,\cdot,\cdot])$ is a canonical Nambu 3-Lie algebra, where $[\cdot,\cdot,\cdot]$ is defined by
\begin{equation}\label{eq:1}
[L_r,L_s,M_t]=f(M_t)(r-s)L_{r+s+k},\quad \forall\:L_r,L_s,M_t\in A.
\end{equation}
\end{definition}

\begin{theorem}
Let $D$ be a $\frac{1}{3}$-derivation of $A_{f,k}$, then there exists a sequence of complex numbers $h,c_i,d_{r,j} \in \bC$ such that
\begin{equation}\label{eq:2}
D(L_r)=hL_r,
\end{equation}
\begin{equation}
D(M_r)=f(M_r)\sum_{i\in \bZ}{c_iL_i}+\sum_{j\in \bZ}{d_{r,j}M_j},
\end{equation}
where $h$ and $d_{r,j}$ satisfy
\begin{equation}\label{tj}
\sum_{j\in \bZ}f(M_j)d_{r,j}=hf(M_r).
\end{equation}
\end{theorem}

\begin{proof}
Since $(A_{f,k},[\cdot,\cdot,\cdot])$ is not finitely generated, we can not use Theorem \ref{th:fc}.

Let $D$ be a $\frac{1}{3}$-derivation of $A_{f,k}$, assume that
$$
D(L_r)=\sum\limits_{i\in \bZ}{a_{r,i}L_i}+\sum\limits_{j\in \bZ}{b_{r,j}M_j},$$
$$D(M_r)=\sum\limits_{i\in \bZ}{c_{r,i}L_i}+\sum\limits_{j\in \bZ}{d_{r,j}M_j}.
$$

Applying $D$ to equation $[L_r,L_s,M_t]=f(M_t)(r-s)L_{r+s+k}$, we obtain
\begin{equation}\label{rski}
\begin{split}
&D([L_r,L_s,M_t])=\sum\limits_{i\in \bZ}{f(M_t)(r-s)a_{r+s+k,i}L_i}+\sum\limits_{j \in \bZ}{f(M_t)(r-s)b_{r+s+k,j}M_j}\\
=&\frac{1}{3}([D(L_r),L_s,M_t]+[L_r,D(L_s),M_t]+[L_r,L_s,D(M_t)])\\
=&\frac{1}{3}\sum\limits_{i\in \bZ}{f(M_t)(i-s)a_{r,i}L_{i+s+k}}
+\frac{1}{3}\sum\limits_{i\in \bZ}{f(M_t)(r-i)a_{s,i}L_{r+i+k}}
+\frac{1}{3}\sum\limits_{j\in \bZ}{f(M_j)(r-s)d_{t,j}L_{r+s+k}},
\end{split}
\end{equation}
comparing coefficients of $M_j$, we have
\[f(M_t)(r-s)b_{r+s+k,j}=0.\]
Since there exists a $t$ such that $f(M_t)\neq 0$, we have $b_{r+s+k,j}=0$ when $r\neq s$,
then
\begin{equation}\label{eq:3}
b_{i,j} = 0 \quad\text{for any} ~~ i,j\in \bZ.
\end{equation}
Comparing coefficients of $L_{r+s+k}$, we have
\begin{equation}\label{eq:4}
3f(M_t)(r-s)a_{r+s+k,r+s+k} = f(M_t)(r-s)a_{r,r} + f(M_t)(r-s)a_{s,s} + \sum_{j \in \bZ} f(M_j)(r-s)d_{t,j}.
\end{equation}

There exists a $t$ such that $f(M_{t})\neq0$, it follows from Eq.\eqref{eq:4} that
\[3(r-s)a_{r+s+k,r+s+k}=(r-s)a_{r,r}+(r-s)a_{s,s}+\frac{\sum_{j \in \bZ} f(M_j)(r-s)d_{t,j}}{f(M_{t})}.\]
When $r\neq s$,
\begin{equation}\label{ahahaha}
3a_{r+s+k,r+s+k}=a_{r,r}+a_{s,s}+\frac{\sum_{j \in \bZ} f(M_j)d_{t,j}}{f(M_{t})},
\end{equation}
which implies $\frac{\sum_{j \in \bZ} f(M_j)d_{t,j}}{f(M_{t})}$ is a number that is independent of $t$, we denote it by $h$, i.e.,
$$h=\frac{\sum_{j\in\bZ}{f(M_j)d_{t,j}}}{f(M_{t})}$$
for any $t$ such that $f(M_t)\neq 0$. By Eq.\eqref{eq:4}, when $f(M_t)=0$,
$$\sum_{j \in \bZ} f(M_j)d_{t,j}=0=hf(M_t).$$
So for any $r\in \bZ$, Eq.\eqref{tj} holds.

By Eq.\eqref{ahahaha}, we have
\begin{equation}\label{abouta}
3a_{r+s+k,r+s+k}=a_{r,r}+a_{s,s}+h,
\end{equation}
let $s=-k$, we obtain $$a_{r,r}=\frac{a_{-k,-k}+h}{2},\quad\forall r\neq -k,$$
substituting it into Eq.\eqref{abouta}, we have $a_{-k,-k}=h,$ then
\begin{equation}\label{eq 4.10}
a_{r,r}=h \quad \text{for any}\quad r\in \bZ.
\end{equation}
By Eq.\eqref{rski}, when $i\neq 0$, comparing the coefficients of $L_{r+s+k+i}$, we have
\begin{equation}\label{eq:5}
3f(M_t)(r-s)a_{r+s+k,r+s+k+i}=f(M_t)(r+i-s)a_{r,r+i}+f(M_t)(r-i-s)a_{s,s+i},
\end{equation}
let $s=-k$, we obtain
\[3(r+k)a_{r,r+i}=(r+k+i)a_{r,r+i}+(r+k-i)a_{-k,-k+i}.\]
Hence for any $i \neq 0$, when $r\neq \frac{i-2k}{2}$
\begin{equation}\label{aiaiai}
a_{r,r+i}=\frac{r+k-i}{2r+2k-i}a_{-k,-k+i}.
\end{equation}
 Then when $s\neq \frac{i-2k}{2}$ and $r+s\neq \frac{i-4k}{2},$
\[a_{s,s+i}=\frac{s+k-i}{2s+2k-i}a_{-k,-k+i}, \]
\[a_{r+s+k,r+s+k+i}=\frac{r+s+2k-i}{2r+2s+4k-i}a_{-k,-k+i}.\]
Substituting the above three equations into Eq.\eqref{eq:5}, we obtain
$$\left( \frac{3(r-s)(r+s+2k-i)}{2r+2s+4k-i}-\frac{(r+i-s)(r+k-i)}{2r+2k-i}- \frac{(r-s-i)(s+k-i)}{2s+2k-i}\right)a_{-k,-k+i}=0$$
when $r\neq \frac{i-2k}{2}$, $s\neq \frac{i-2k}{2}$ and $r+s\neq \frac{i-4k}{2}$.

Let $r=2i-k,s=-i-k$, since $i\neq 0$, then $r\neq \frac{i-2k}{2}$,$s\neq \frac{i-2k}{2}$ and $r+s\neq \frac{i-4k}{2}$, then we have
$$-\frac{8}{3}ia_{-k,-k+i}=0,$$
so $a_{-k,-k+i}=0$ for $i\neq 0$. By Eq.\eqref{aiaiai}, $a_{r,r+i}=0$ when $r\neq \frac{i-2k}{2}$. There exists $r,s$ such that $r\neq \frac{i-2k}{2},s\neq \frac{i-2k}{2},r\neq s$ and $r+s+k=\frac{i-2k}{2}$, then by  Eq.\eqref{eq:5}, we obtain
\begin{equation}\label{eq:4.13}
a_{r,r+i}=0 \quad \text{for any} \quad r\in \bZ.
\end{equation}

By Eq.(\ref{eq:3}), (\ref{eq 4.10}) and (\ref{eq:4.13}), we have for any $r \in \bZ$
\[D(L_r)=hL_r.\]

Applying $D$ to equation $[L_r,M_s,M_t]=0$, we obtain
\begin{align*}
&D([L_r,M_s,M_t])=0\\
=&\frac{1}{3}([D(L_r),M_s,M_t]+[L_r,D(M_s),M_t]+[L_r,M_s,D(M_t)])\\
=&\frac{1}{3}\sum\limits_{i\in \bZ}{f(M_t)(r-i)c_{s,i}L_{r+i+k}}-\frac{1}{3}\sum\limits_{i\in \bZ}{f(M_s)(r-i)c_{t,i}L_{r+i+k}},
\end{align*}
comparing coefficients of the basis elements we obtain that
\[f(M_t)(r-i)c_{s,i}-f(M_s)(r-i)c_{t,i}=0,\]
then
\[f(M_t)c_{s,i}=f(M_s)c_{t,i}\]
for any $s,t,i\in \bZ$, so there exists a series of number $c_i$ such that
$$c_{s,i}=c_if(M_s)$$
for any $s\in \bZ$,
then
\begin{equation*}
D(M_r)=\sum_{i\in\bZ}{c_{r,i}L_i}+\sum_{j\in\bZ}{d_{r,j}M_j}=f(M_r)\sum_{i\in\bZ}{c_iL_i}+\sum_{j\in\bZ}{d_{r,j}M_j}.
\end{equation*}

For any $L_r,L_s,L_t \in A_{f,k}$,
\begin{align*}
0=D([L_r,L_s,L_t])=&\frac{1}{3}([D(L_r),L_s,L_t]+[L_r,D(L_s),L_t]+[L_r,L_s,D(L_t)])\\
=&\frac{1}{3}\sum_{j\in \bZ}{b_{r,j}[M_j,L_s,L_t]}+\frac{1}{3}\sum_{j\in \bZ}{b_{s,j}[L_r,M_j,L_t]}+\frac{1}{3}\sum_{j\in \bZ}b_{t,j}{[L_r,L_s,M_j]}=0.
\end{align*}

For any $M_r,M_s,M_t \in A_{f,k}$,
\begin{align*}
0=D([M_r,M_s,M_t])=&\frac{1}{3}([D(M_r),M_s,M_t]+[M_r,D(M_s),M_t]+[M_r,M_s,D(M_t)])=0
\end{align*}
holds naturally.

In summary, we have
$$
D(L_r)=hL_r,$$
$$D(M_r)=f(M_r)\sum\limits_{i\in \bZ}{c_iL_i}+\sum\limits_{j\in \bZ}{d_{r,j}M_j},
$$
where $d_{r,j}$ and $h$ satisfy
$$\sum_{j\in \bZ}f(M_j)d_{r,j}=hf(M_r)$$
for any $r\in\bZ$.
\end{proof}

\begin{theorem} \label{th1}
Let $(A_{f,k}, \cdot, [\cdot,\cdot,\cdot])$ be a transposed Poisson structure defined on the 3-Lie algebra $A_{f,k}$, then there exists a sequence of complex numbers $\alpha$, $c_{p}$ and $d_{i,j,q}$ such that the multiplication on $(A_{f,k}, \cdot)$ is given by
\begin{align*}
L_i \cdot L_j &= 0, \\
L_i \cdot M_j &= \alpha f(M_j) L_i, \\
M_i \cdot M_j &= f(M_i)f(M_j) \sum_{p \in \mathbb{Z}} c_{p} L_p + \sum_{q \in \mathbb{Z}} d_{i,j,q} M_q,
\end{align*}
where $\alpha$, $c_{p}$ and $d_{i,j,q}$ $\in \bC$ satisfy
\begin{align}
    &d_{i,j,p} = d_{j,i,p}, \label{15} \\
    &\sum_{q \in \mathbb{Z}} f(M_q) d_{i,j,q} = \alpha f(M_i) f(M_j), \label{19} \\
    &\sum_{q \in \mathbb{Z}} d_{r,s,q} d_{q,t,p} = \sum_{q \in \mathbb{Z}} d_{s,t,q} d_{q,r,p}  \label{18}
\end{align}
for any $i, j, p, r, s, t \in \mathbb{Z}$.

Moreover, when $\alpha=0$ and $c_p=0$ for any $p\in\bZ$, $(A_{f,k}, \cdot, [\cdot,\cdot,\cdot])$ is a Poisson $3$-Lie algebra; otherwise, it is not.
\end{theorem}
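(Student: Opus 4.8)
The plan is to reduce everything to the classification of $\frac{1}{3}$-derivations of $A_{f,k}$ already established above. By Lemma \ref{jhjh}, every multiplication operator $\varphi_z(y)=z\cdot y$ of a transposed Poisson structure is a $\frac{1}{3}$-derivation; conversely, if $\cdot$ is commutative and associative and each $\varphi_z$ is a $\frac{1}{3}$-derivation, then expanding Definition \ref{13derivation} and using commutativity recovers the compatibility of Definition \ref{TP3}. Hence classifying transposed Poisson structures on $A_{f,k}$ amounts to classifying commutative associative multiplications all of whose multiplication operators lie in the derivation space computed above. First I would apply that classification to $\varphi_{L_i}$ and to $\varphi_{M_i}$: each is forced into the shape $\varphi_{L_i}(L_r)=h_iL_r$, $\varphi_{M_i}(L_r)=h'_iL_r$, together with the $M$-images $f(M_r)\sum_p c_pL_p+\sum_q d_{r,q}M_q$ of the stated derivation form, and each carries its own instance of the scalar constraint \eqref{tj}.

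Next I would impose commutativity of $\cdot$ one slot at a time. Comparing $L_i\cdot L_j$ with $L_j\cdot L_i$ gives $h_iL_j=h_jL_i$, hence $h_i=0$ and $L_i\cdot L_j=0$. Comparing $L_i\cdot M_j$ with $M_j\cdot L_i$, the latter being a multiple of $L_i$, kills the $M$-component of $L_i\cdot M_j$ and concentrates its $L$-component at index $i$, producing $L_i\cdot M_j=\alpha f(M_j)L_i$ with $\alpha$ forced to be independent of $i,j$. Comparing $M_i\cdot M_j$ with $M_j\cdot M_i$ factors the $L$-coefficients as $f(M_i)f(M_j)c_p$ and yields the symmetry $d_{i,j,p}=d_{j,i,p}$, i.e. \eqref{15}. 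The constraint \eqref{tj} applied to $\varphi_{M_i}$, where now $h=\alpha f(M_i)$, then reads exactly \eqref{19}.

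The main computation is associativity. The only nontrivial triple is $(M_r\cdot M_s)\cdot M_t=M_r\cdot(M_s\cdot M_t)$: expanding both sides with the products just derived, each $L$-part collapses — via \eqref{19}, which converts $\sum_q d_{r,s,q}f(M_q)$ into $\alpha f(M_r)f(M_s)$ — to the same symmetric expression $2\alpha f(M_r)f(M_s)f(M_t)\sum_p c_pL_p$, while matching the $M$-parts gives $\sum_q d_{r,s,q}d_{q,t,p}=\sum_q d_{s,t,q}d_{r,q,p}$, which becomes \eqref{18} after applying \eqref{15}. I would then verify that every remaining associativity triple (those involving at least one $L$) holds automatically once \eqref{19} is in force, and that $\varphi_z$ is a $\frac{1}{3}$-derivation for general $z$ by linearity; combined with the converse direction of Lemma \ref{jhjh}, this shows that the displayed data satisfying \eqref{15}, \eqref{19}, \eqref{18} is precisely a transposed Poisson structure. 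I expect this bookkeeping — making the $L$-parts cancel correctly and confirming that the mixed triples hide no extra constraint — to be the main source of friction, though each step is a finite linear computation rather than a genuine obstacle.

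For the ``moreover'' claim I would test the Poisson compatibility $[x,y,uv]=u[x,y,v]+[x,y,u]v$ directly. Taking $x=L_r,\,y=L_s,\,u=M_a,\,v=M_b$ gives $\alpha f(M_a)f(M_b)(r-s)L_{r+s+k}$ on the left (again using \eqref{19}) but $2\alpha f(M_a)f(M_b)(r-s)L_{r+s+k}$ on the right, so the factor-of-two discrepancy forces $\alpha=0$; taking $x=L_r,\,y=M_s,\,u=M_a,\,v=M_b$ gives $f(M_a)f(M_b)f(M_s)\sum_p c_p(p-r)L_{r+p+k}$ on the left and $0$ on the right, forcing every $c_p=0$. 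Conversely, when $\alpha=0$ and all $c_p=0$ the products land so that both tests, and all remaining cases, vanish identically, so $(A_{f,k},\cdot,[\cdot,\cdot,\cdot])$ is a Poisson $3$-Lie algebra; otherwise at least one of the two tests fails, so it is not.
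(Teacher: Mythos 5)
Your proposal is correct and takes essentially the same route as the paper's proof: classify the multiplication operators $\varphi_{L_i},\varphi_{M_i}$ via the $\frac{1}{3}$-derivation theorem, extract the product table and Eqs.~\eqref{15}, \eqref{19} from commutativity, derive Eq.~\eqref{18} from the single nontrivial associativity triple $(M_r\cdot M_s)\cdot M_t$, and settle the ``moreover'' claim with the same two Poisson-compatibility tests (your placement of the product in the third bracket slot differs from the paper's first-slot version only by skew-symmetry of the bracket). Your explicit remarks on sufficiency (the converse of Lemma~\ref{jhjh}) and on invoking \eqref{15} to turn $d_{r,q,p}$ into $d_{q,r,p}$ when stating \eqref{18} are slightly more careful than the paper's implicit treatment, but they do not constitute a different approach.
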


\begin{proof}
For brevity, we omit the multiplication symbol $\cdot$ in the proof. We aim to describe the multiplication $\cdot$. Let $(A_{f,k},\cdot,[\cdot,\cdot,\cdot])$ be an transposed Poisson structure defined on 3-Lie algebra $A_{f,k}$. By Lemma \ref{jhjh}, for every element $x \in A_{f,k}$, there is a related $\frac{1}{3}$-derivation $\varphi_x$ of $A_{f,k}$ such that $\varphi_x(y)=x \cdot y$. For any $i\in \bZ$, we set
\[\varphi_{L_i}:
\begin{cases}
\varphi_{L_i}(L_r)=h_{L_i}L_r,\\
\varphi_{L_i}(M_r)=f(M_r)\sum\limits_{p\in\bZ}{a_{i,p}L_p}+\sum\limits_{q\in\bZ}{b_{i,r,q}M_q},
\end{cases}
\]
\[\varphi_{M_i}:
\begin{cases}
\varphi_{M_i}(L_r)=h_{M_i}L_r,\\
\varphi_{M_i}(M_r)=f(M_r)\sum\limits_{p\in\bZ}{c_{i,p}L_p}+\sum\limits_{q\in\bZ}{d_{i,r,q}M_q},
\end{cases}
\]
where \( h_{L_i} \), \( h_{M_i} \), \( b_{i,r,q} \) and \( d_{i,r,q} \) satisfy

$$\sum_{q \in \mathbb{Z}} f(M_q) b_{i,r,q} = h_{L_i} f(M_r),  $$
\begin{equation}\label{eq:9}
\sum_{q \in \mathbb{Z}} f(M_q) d_{i,r,q} = h_{M_i} f(M_r).
\end{equation}

Since
$$L_iL_j=\varphi_{L_i}(L_j)=h_{L_i}L_j,\quad L_jL_i=\varphi_{L_j}(L_i)=h_{L_j}L_i,\quad \forall i,j \in \bZ,$$
we have
\begin{equation}
h_{L_i} = h_{L_j}=0,
\end{equation}
hence,
$$L_iL_j=0.$$

Since
$$L_iM_j=\varphi_{L_i}(M_j)=f(M_j)\sum\limits_{p\in\bZ}{a_{i,p}L_p}+\sum\limits_{q\in\bZ}{b_{i,j,q}M_q},\quad M_jL_i=\varphi_{M_j}(L_i)=h_{M_j}L_i,$$
it follows that, for any $i,j,q\in\bZ$,
\begin{align}
&b_{i,j,q}=0, \label{eq:11} \\
&h_{M_j}=f(M_j)a_{i,i}, \label{eq:12}
\end{align}
and when $i\neq j$
\begin{equation}
a_{i,j}=0.
\end{equation}
By Eq.\eqref{eq:12}, we have $a_{i,i}$ is a number that is independent of $i$, and we denote it by $\alpha$,
hence,
$$L_iM_j=\alpha f(M_j)L_i.$$

From Eq.\eqref{eq:9} and \eqref{eq:12}, we have
$$\sum_{q \in \mathbb{Z}} f(M_q) d_{i,j,q} = \alpha f(M_i) f(M_j),$$
i.e., we get the Eq.\eqref{19}.

Since
$$M_iM_j=\varphi_{M_i}(M_j)=f(M_j)\sum_{p\in\bZ}{c_{i,p}L_p}+\sum_{q\in\bZ}{d_{i,j,q}M_q}, \quad M_jM_i=\varphi_{M_j}(M_i)=f(M_i)\sum_{p\in\bZ}{c_{j,p}L_p}+\sum_{q\in\bZ}{d_{j,i,q}M_q},$$
we have
\begin{equation*}
d_{i,j,q}=d_{j,i,q}
\end{equation*}
and
\begin{equation}\label{ccccc}
f(M_j)c_{i,p}=f(M_i)c_{j,p},
\end{equation}
then there exists a number $ c_p $ such that $ c_{i,p} = c_pf(M_i) $ for any $i\in\bZ$, hence,
$$M_i M_j=f(M_i)f(M_j)\sum\limits_{p\in\bZ}{c_pL_p}+\sum\limits_{q\in\bZ}{d_{i,j,q}M_q}.$$

In summary, if $(A_{f,k},\cdot,[\cdot,\cdot,\cdot])$ is a transposed Poisson $3$-Lie algebra, the multiplication table is given by
\begin{align*}
&L_i L_j = 0,  \\
&L_i M_j=\alpha f(M_j)L_i ,  \\
&M_i M_j=f(M_i)f(M_j)\sum_{p\in\bZ}{c_pL_p}+\sum_{q\in\bZ}{d_{i,j,q}M_q}
\end{align*}
for any $i,j \in \bZ$ and the coefficients satisfy Eq.\eqref{15} and \eqref{19}.

Considering the associative identity \((X Y)  Z = X (Y Z)\), we have the following discussion.

For $L_r,M_s,M_t \in A_{f,k}$, we have
\begin{align*}
(L_r  M_s)  M_t &= \alpha f(M_s) L_r  M_t = \alpha ^ 2f(M_s) f(M_t) L_r, \\
L_r  (M_s  M_t) &= \sum_{q \in \mathbb{Z}} d_{s,t,q} L_r  M_q = \left( \sum_{q \in \mathbb{Z}} f(M_q) d_{s,t,q} \right) \alpha L_r = \alpha ^ 2f(M_s) f(M_t) L_r,
\end{align*}
thus $(L_r  M_s)  M_t=L_r  (M_s  M_t)$ holds.\\

For $M_r,M_s,M_t \in A_{f,k}$, we have
\begin{align*}
&(M_r  M_s)  M_t = \left( f(M_s)f(M_r)\sum_{p\in\bZ}c_{p}L_p + \sum_{q\in\bZ}d_{r,s,q}M_q \right)  M_t \\
=& f(M_t)f(M_s)f(M_r)\alpha \sum_{p\in\bZ}c_{p}L_p + f(M_t)\sum_{q\in\bZ}\sum_{p\in\bZ}f(M_q)d_{r,s,q}c_{p}L_p + \sum_{q\in\bZ}\sum_{p\in\bZ} d_{r,s,q}d_{q,t,p}M_p\\
=& f(M_t)f(M_s)f(M_r)\alpha \sum_{p\in\bZ}c_{p}L_p + f(M_t)\sum_{p\in\bZ}(\sum_{q\in\bZ}f(M_q)d_{r,s,q})c_{p}L_p + \sum_{q\in\bZ}\sum_{p\in\bZ} d_{r,s,q}d_{q,t,p}M_p\\
\overset{\text{\eqref{19}}}{=}& f(M_t)f(M_s)f(M_r)\alpha\sum_{p\in\bZ} c_{p}L_p + f(M_t)f(M_s)f(M_r)\alpha\sum_{p\in\bZ} c_{p}L_p + \sum_{q\in\bZ}\sum_{p\in\bZ} d_{r,s,q}d_{q,t,p}M_p.
\end{align*}
\begin{align*}
&M_r  (M_s  M_t) = M_r  \left( f(M_t)f(M_s)\sum_{p \in \bZ} c_{p} L_p + \sum_{q \in \bZ} d_{s,t,q} M_q \right) \\
=& f(M_r) f(M_t) f(M_s)\alpha\sum_{p \in \bZ} c_{p} L_p + f(M_r)\sum_{q \in \bZ} \sum_{p \in \bZ} f(M_q) d_{s,t,q} c_{p} L_p + \sum_{q \in \bZ} \sum_{p \in \bZ} d_{s,t,q} d_{r,q,p} M_p\\
=& f(M_r) f(M_t) f(M_s)\alpha\sum_{p \in \bZ} c_{p} L_p + f(M_r)\sum_{p \in \bZ} (\sum_{q \in \bZ} f(M_q) d_{s,t,q}) c_{p} L_p + \sum_{q \in \bZ} \sum_{p \in \bZ} d_{s,t,q} d_{r,q,p} M_p\\
\overset{\text{\eqref{19}}}{=}& f(M_r) f(M_t) f(M_s)\alpha\sum_{p \in \bZ} c_{p} L_p + f(M_r)f(M_s)f(M_t)\alpha\sum_{p \in \bZ}c_{p} L_p + \sum_{q \in \bZ} \sum_{p \in \bZ} d_{s,t,q} d_{r,q,p} M_p.
\end{align*}

Since $(M_r  M_s)  M_t=M_r  (M_s  M_t)$, comparing the coefficients of $M_p$, we have
\begin{equation*}
\sum_{q\in\bZ}{d_{r,s,q}d_{q,t,p}}=\sum_{q\in\bZ}{d_{s,t,q}d_{r,q,p}},
\end{equation*}
that is, Eq.\eqref{18} holds.

For any $L_r,M_s,M_t \in A_{f,k}$, we have
\begin{equation}\label{lmm1}
(L_rM_s) M_t=\alpha f(M_s)L_r M_t=\alpha^2 f(M_s)f(M_t)L_r
\end{equation}
and
\begin{equation}\label{lmm2}
L_r (M_s M_t)=L_r \left(f(M_s)f(M_t) \sum_{p \in \mathbb{Z}} c_{p} L_p + \sum_{q \in \mathbb{Z}} d_{s,t,q} M_q\right)\sum_{q \in \mathbb{Z}}\alpha f(M_q)d_{s,t,q}L_r\overset{\text{\eqref{19}}}{=}\alpha^2 f(M_s)f(M_t)L_r,
\end{equation}
so $(L_r M_s) M_t=L_r(M_s M_t)$ holds.

For any $M_r,L_s,M_t \in A_{f,k}$, by Eq.\eqref{lmm1}, we have
$$(M_r  L_s) M_t=(L_s M_r) M_t=\alpha^2 f(M_r)f(M_t)L_s$$
and
$$M_r (L_s M_t)=(L_s M_t) M_r=\alpha^2 f(M_t)f(M_r)L_s,$$
so $(M_r L_s) M_t=M_r (L_s M_t)$ holds.

Considering $M_r,M_s,L_t \in A_{f,k}$, by Eq.\eqref{lmm2}, we have
$$(M_r M_s)L_t=L_t (M_rM_s)=\alpha^2 f(M_r)f(M_s)L_t.$$
By Eq.\eqref{lmm1},
$$M_r( M_s L_t)=(L_tM_s)M_r=\alpha^2 f(M_s)f(M_r)L_t,$$
so $(M_r M_s) L_t=M_r (M_s L_t)$ holds.

The remaining cases are as follows
\begin{align*}
(L_r L_s)L_t=L_r( L_s L_t),\\
(L_r L_s) M_t=L_r( L_s M_t),\\
(L_r M_s) L_t=L_r( M_s L_t),\\
(M_r L_s) L_t=M_r ( L_s L_t).
\end{align*}
Clearly, they hold naturally.
In summary, we obtain that the transposed Poisson structures defined on $A_{f,k}$ are given by
\begin{align*}
&L_i L_j = 0,  \\
&L_i M_j=\alpha f(M_j)L_i ,  \\
&M_i M_j=f(M_i)f(M_j)\sum_{p\in\bZ}{c_{p}L_p}+\sum_{q\in\bZ}{d_{i,j,q}M_q},
\end{align*}
where $\alpha$ and $d_{i,j,q}$ satisfy Eq.\eqref{15}, \eqref{19} and \eqref{18}.

Next, we consider whether the transposed Poisson 3-Lie algebra $(A_{f,k},\cdot,[\cdot,\cdot,\cdot])$ is a Poisson $3$-Lie algebra.

If $(A_{f,k},\cdot,[\cdot,\cdot,\cdot])$ is a Poisson $3$-Lie algebra.

By Eq.\eqref{poisson3}, for any $M_r,M_s,L_t,M_q \in A_{f,k}$, we have
\begin{equation*}
[M_rM_s,L_t,M_q]=f(M_r)f(M_s)f(M_q)\sum_{p\in\bZ}{c_p(p-t)L_{p+t+k}}=[M_r,L_t,M_q]M_s+M_r[M_s,L_t,M_q]=0,
\end{equation*}
comparing coefficients of the basis elements we obtain
\begin{equation}
f(M_r)f(M_s)f(M_q)c_p(p-t)=0.
\end{equation}
There exist $r,s,q,t\in\bZ$ such that $f(M_r)$, $f(M_s)$, $f(M_q)$ and $p-t$ are all nonzero, hence $c_p=0$ for any $p\in\bZ$.

By Eq.\eqref{poisson3}, for any $M_r,M_s,L_t,L_q\in A_{f,k}$, we have
\begin{equation*}
\begin{split}
&[M_rM_s,L_t,L_q]
=f(M_r)f(M_s)\sum_{p\in\bZ}{c_p[L_p,L_t,L_q]}+\sum_{p\in\bZ}{d_{r,s,p}[M_p,L_t,L_q]}\\
=&(t-q)\sum_{p\in\bZ}{d_{r,s,p}f(M_p)L_{t+q+k}}
\overset{\text{\eqref{19}}}{=}(t-q)\alpha f(M_r)f(M_s)L_{t+q+k},
\end{split}
\end{equation*}
on the other hand,
\begin{equation*}
\begin{split}
&[M_rM_s,L_t,L_q]\\
=&[M_r,L_t,L_q]M_s+M_r[M_s,L_t,L_q]
=f(M_r)(t-q)L_{t+q+k}M_s+f(M_s)(t-q)L_{t+q+k}M_r\\
=&(t-q)\alpha f(M_r)f(M_s)L_{t+q+k}+(t-q)\alpha f(M_s)f(M_r)L_{t+q+k}=2(t-q)\alpha f(M_r)f(M_s)L_{t+q+k},
\end{split}
\end{equation*}
comparing coefficients of the basis elements we obtain
\begin{equation}
(t-q)\alpha f(M_r)f(M_s)=0.
\end{equation}
There exist $r,s,t,q\in\bZ$ such that $f(M_r)$, $f(M_s)$ and $t-q$ are all nonzero, then $\alpha=0$.

By Eq.\eqref{poisson3}, for $L_r,M_s,L_t,M_q\in A_{f,k}$, we have
\begin{equation*}
\begin{split}
&[L_rM_s,L_t,M_q]=\alpha f(M_s)[L_r,L_t,M_q]=(r-t)\alpha f(M_q)f(M_s)L_{r+t+k}\\
=&[L_r,L_t,M_q]M_s+L_r[M_s,L_t,M_q]=f(M_q)(r-t)L_{r+t+k}M_s=(r-t)\alpha f(M_q)f(M_s)L_{r+t+k}.
\end{split}
\end{equation*}
The remaining cases are as follows
\begin{align*}
&[L_rL_s,L_t,L_q]=[L_r,L_t,L_q]L_s+L_r[L_s,L_t,L_q],\\
&[L_rL_s,L_t,M_q]=[L_r,L_t,M_q]L_s+L_r[L_s,L_t,M_q],\\
&[L_rL_s,M_t,M_q]=[L_r,M_t,M_q]L_s+L_r[L_s,M_t,M_q],\\
&[L_rM_s,M_t,M_q]=[L_r,M_t,M_q]M_s+L_r[M_s,M_t,M_q],\\
&[L_rM_s,L_t,L_q]=[L_r,L_t,L_q]M_s+L_r[M_s,L_t,L_q],\\
&[M_rM_s,M_t,M_q]=[M_r,M_t,M_q]M_s+M_r[M_s,M_t,M_q].
\end{align*}
It is easy to see that all the above equations hold. Thus, if $\alpha=0$ and $c_p=0$ for any $p\in\bZ$, $(A_{f,k}, \cdot, [\cdot,\cdot,\cdot])$ is a Poisson $3$-Lie algebra. It gives the complete statement of the theorem.
\end{proof}
Next, we present a concrete example about the transposed Poisson structures defined on the 3-Lie algebra $A_{f,k}$.

\begin{example}
Let $\{d_i,i\in \bZ\}$ be a nonzero sequence of complex numbers such that
$d_i = 0$ for all but a finite number of $i$. For any $i,j$ and $p$ $\in \bZ$, we set
$$d_{i,j,p}=d_pf(M_i)f(M_j),$$
then $d_{i,j,p}$ satisfy Eq.\eqref{15}, \eqref{19} and \eqref{18}, we have
$$\sum_{q \in \mathbb{Z}} f(M_q) d_q = \alpha,$$
then we get the following transposed Poisson structure on $A_{f,k}$
\begin{align*}
&L_i L_j=0,\\
&L_i M_j=\alpha f(M_j)L_i,\\
&M_i M_j=f(M_i)f(M_j)( \sum_{p \in \mathbb{Z}} c_{p} L_p + \sum_{q \in \mathbb{Z}} d_q M_q).
\end{align*}

\end{example}
\begin{center}
{\bf Funding}
\end{center}

This work is supported by the NSFC(12201624) and the Scientific Research Foundation of Civil Aviation University of China (3122023PT20).

\begin{center}
{\bf  Data availability}
\end{center}

No data was used for the research described in the article.

\renewcommand{\refname}{References}

\end{document}